\newcommand\old[1]{}
\newcommand{\calA}{\mathcal{A}}
\newcommand{\calB}{\mathcal{B}}
\newtheorem{theorem}{Theorem}
\newtheorem{lemma}[theorem]{Lemma}
\newtheorem{corollary}[theorem]{Corollary}
\theoremstyle{remark}
\theoremstyle{definition}
\newtheorem*{definition}{Definition}
\newcommand\ghost[1]{\textcolor{white}{#1}}
\newcommand{\arxiv}[1]{{\tt \href{http://arxiv.org/abs/#1}{arXiv:#1}}}
\def\D{\mathcal{D}}
\def\SS{\mathfrak{S}}
\def\Z{\mathbb{Z}}
\def\a{{\alpha}}
\def\inv{\mathrm{inv}}
\def\blue{\textcolor{blue}}
\def\red{\textcolor{red}}
\begin{document}

\title[]{How to decompose a permutation into a pair of labeled Dyck paths by playing a game}

\author{Louis J. Billera, Lionel Levine and Karola M\'{e}sz\'{a}ros}

\address{Department of Mathematics, Cornell University, Ithaca NY 14850}
\thanks{The second author was partially supported by NSF DMS-1243606.}

\date{June 26, 2013}
\keywords{AA inversion, BB inversion, Dyck path, fair division, Hermite history, $q$-analogue}
\subjclass[2010]{05A05, %Permutations, words, matrices
05A15,  	%Exact enumeration problems, generating functions
91A10  %Noncooperative games
}
 
\begin{abstract}
We give a bijection between permutations of $1,\ldots,2n$ and certain pairs of Dyck paths with labels on the down steps.  The bijection arises from a game in which two players alternate selecting from a set of $2n$ items: the permutation encodes the players' preference ordering of the items, and the Dyck paths encode the order in which items are selected under optimal play.  We enumerate permutations by certain  statistics, \emph{AA inversions} and \emph{BB inversions}, which have natural interpretations in terms of the game. We give new proofs of classical identities such as
	\[  \sum_{p}  \prod_{i=1}^n q^{h_i -1} [h_i]_q = [1]_q [3]_q \cdots [2n-1]_q
 \]	
where the sum is over all Dyck paths $p$ of length $2n$, and $h_1,\ldots,h_n$ are the heights of the down steps of $p$.
\end{abstract}

\maketitle

\section{Introduction}

Consider the following procedure applied to a permutation written in one-line notation: \emph{alternately mark ``B'' below the smallest unmarked number and then mark ``A'' below the leftmost unmarked number} until all numbers are marked.  For example, 
%if $w=2\mbox{ } 6\mbox{ } 4\mbox{ } 1 \mbox{ }3\mbox{ } 11\mbox{ } 5 \mbox{ }7 \mbox{ }10\mbox{ } 12\mbox{ } 9\mbox{ } 8$, we obtain the marking
	\[ \begin{array}{cccccccccccc} {2}&{6}&4&1&3&{11}&5&{7}&{10}&{12}&9&8  \\ A&A&B&B&B&A&B&A&A&A&B&B \end{array} \] 
%indicating that Alice eats morsels $1,2,6,8,9,10$ and Bob eats $3,4,5,7,11,12$.
This procedure encodes the optimal play of a certain 2-player game, described below.  The purpose of this note is to analyze the procedure from a combinatorial point of view.

In the game of Ethiopian dinner \cite{LS}, Alice and Bob alternate eating morsels from a common plate.  Alice and Bob have different preferences.  We identify the set of morsels with $[2n] := \{1,\ldots,2n\}$, and fix a permutation $w$ of $[2n]$.  The players' preference orders are given by
	\begin{align*} \text{Alice } \text{prefers $j$ to $i$} & \quad \Longleftrightarrow \quad w(i)<w(j) \\
			       \text{Bob }  \text{prefers $j$ to $i$} & \quad \Longleftrightarrow \quad \ghost{w()}i<j \end{align*}
\old{
	\begin{align*} &\mbox{Alice:} && w^{-1}(1) < \ldots < w^{-1}(2n) \\ 
			       &\text{Bob:} && \ghost{w^{-1}(}1\ghost{)} < \ldots < 2n \end{align*}
That is, Alice likes morsel $w^{-1}(2n)$ the best, and Bob likes morsel $2n$ the best.}  
For concreteness, we will always assume the morsels 
%$1,\ldots,2n$ 
are placed in increasing order from left to right, and that morsel $i$ is labeled with $w(i)$. 
%The labels thus comprise the line notation of $w$.
%Both players' preferences can then be read off: 
Then Bob prefers morsels that are further to the right, and Alice prefers morsels with higher labels.
Assuming these preferences are common knowledge, and that Alice has the first move, which morsel should she take in order to  maximize her own enjoyment?

Kohler and Chandrasekaran \cite{KC} discovered the elegant optimal strategy, which proceeds by analyzing the moves in reverse order: On the last move, Bob eats Alice's least favorite morsel $w^{-1}(1)$. On the second to last move, Alice eats Bob's least favorite morsel from among those remaining; and so on.
%(i.e., she eats morsel $1$ unless $w^{-1}(1)=1$, in which case she eats morsel $2$).  
In other words, the procedure described in the first paragraph assigns each morsel to the player who will eventually eat it. We call this the \emph{crossout procedure}, because on each turn a player determines what to eat by crossing out morsels that will be eaten later until only one morsel remains.

\begin{figure}[htbp] 
\begin{center} 
\includegraphics[width=\textwidth]{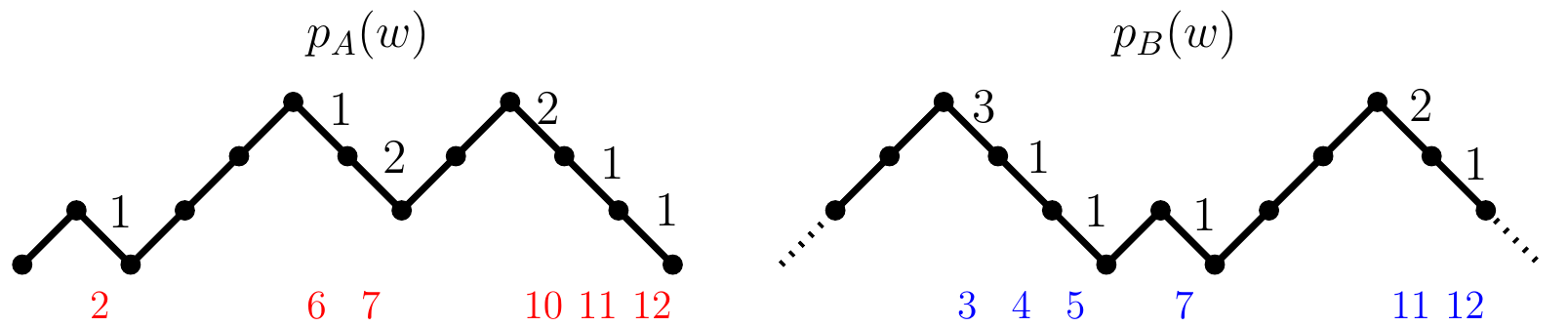} 
\smallskip
\[ w = \left[ \begin{array}{cccccccccccc} 
 1 & 2 & \mathbf{\blue{3}} & \mathbf{\blue{4}} & \mathbf{\blue{5}} & 6 & \mathbf{\blue{7}} & 8 & 9 & 10 & \mathbf{\blue{11}} & \mathbf{\blue{12}} \\
 \mathbf{\red{2}} & \mathbf{\red{6}} & 4 & 1 & 3 & \mathbf{\red{11}} & 5 & \mathbf{\red{7}} & \mathbf{\red{10}} & \mathbf{\red{12}} & 9 & 8 \\
A & A & B & B & B & A & B & A & A & A & B & B 
\end{array} \right] \]
\caption{Pair of labeled Dyck paths $p_A(w)$ and $p_B(w)$ associated to the permutation $w = 2\; 6\; 4\; 1 \;3\; 11\; 5 \;7 \;10\; 12\; 9\; 8$.  The down steps of $p_A$ correspond to the numbers marked A (shown in red) while the down steps of $p_B$ correspond to the \emph{positions} marked B (shown in blue).  The labels on the down steps of $p_A$ and $p_B$ indicate (in different ways) the relative order of the numbers marked A and B respectively.  
%For example, the down step in position $7$ of $p_A$ is labeled $2$ because there are $2-1 = 1$ numbers marked A larger than $7$ to the left of $7$.  The down step in position $11+1$ of $p_B$ is labeled $2$ because there are $2-1 = 1$ numbers marked B smaller than $w(11)$ to the right of $w(11)$.
} 
\label{bij2n} 
\end{center} 
\end{figure}

%In the example above, Alice ate morsels $1$, $2$, $6$, $8$, $9$, $10$ and Bob ate $3$, $4$, $5$, $7$, $11$, $12$. These numbers are the \emph{positions} of the morsels rather than their labels. The positions are relevant to Bob: he got to eat his two favorite morsels, $11$ and $12$, and was also forced to eat his third worst favorite, $3$. But Alice would rather use the \emph{labels} and not the positions to keep track of what she ate, because it is the labels that indicate her preferences. 

Next we describe the \emph{crossout correspondence}, which associates to the permutation $w$ a pair of labeled Dyck paths encoding who eats what. A \emph{Dyck path} of length $2n$ is a path in $\Z^2$ from $(0,0)$ to $(2n,0)$ whose steps are either $(1,1)$ (``up'') or $(1,-1)$ (``down'') and which stays weakly above the $x$-axis.  Such a path $p$ is uniquely specified by its set of \emph{down steps} $D(p) \subset [2n]$, which is the set of $k$ such that the segment from $(k-1,h)$ to $(k,h-1)$ lies on $p$ for some $h$.  We call this $h$ the \emph{height} of the down step.  Let $\calA(w)$ and $\calB(w)$ be respectively the morsels eaten by Alice and Bob.  We let $p_A(w)$ be the path of length $2n$ with down steps $\{w(a)\}_{a\in \calA(w)}$, and let $p_B(w)$ be  the path of length $2n+2$ with down steps $\{b+1\}_{b\in \calB(w)} \cup \{2n+2\} $.

To indicate the order in which morsels are eaten, we add labels to the down steps of the paths (Figure~\ref{bij2n}).  Let $\calA(w) =\{ a_1< \ldots < a_n \}$ be the positions marked A, and let $J \in \SS_n$ be the permutation such that $w(a_i)$ is the $J(i)$th down step of $p_A$.  This down step receives label
 	\[ \ell_{J(i)} := 1 + \#\{j<i \mid w(a_j)>w(a_i)\}. \]  
For example, the down step in position~$7$ of $p_A$ in Figure~\ref{bij2n} is labeled~$2$ because there are $2-1 = 1$ numbers marked A larger than~$7$ to the left of~$7$.  

Now let $b_1,\ldots,b_n$ be the positions marked B, ordered so that $w(b_1) < \ldots < w(b_n)$.  Let $K \in \SS_n$ be the permutation such that $b_i+1$ is the $K(i)$th down step of $p_B$.  This down step receives label 
	\[ m_{K(i)} := 1 + \#\{j<i \mid b_j > b_i \}. \]
For example, the down step in position $11+1$ of $p_B$ in Figure~\ref{bij2n} is labeled~$2$ because there are $2-1 = 1$ numbers marked B smaller than $w(11)$ to the right of $w(11)$. 
Note that $p_B$ has $n+1$ down steps, the last of which has no label.

For a Dyck path $p$, denote by $h_i(p)$ be the height of the $i$th down step of~$p$, and let 
	\[ h_i^*(p) = \begin{cases} h_i(p)-1 & \text{if there is no up step to the right of the $i$th down step}  \\ 
			h_i(p) & \text{else.} \end{cases} \]

\begin{theorem} 
\label{t.bij}
The crossout correspondence is a bijection between
permutations $w \in  \mathfrak{S}_{2n}$ and 4-tuples $(p_A,p_B,(\ell_i)_{i=1}^n, (m_i)_{i=1}^n)$, such that
\smallskip
\begin{itemize}
\item $p_A$ is a Dyck path of length $2n$.
\smallskip
\item $p_B$ is a Dyck path of length $2n+2$.
\smallskip
\item $1 \leq \ell_i \leq h_i(p_A)$ for all $i=1,\ldots,n$.
\smallskip
\item $1 \leq m_i \leq h^*_i(p_B)$ for all $i=1,\ldots,n$.
%\item For each $i=1,\ldots,n$ the $i$-th down step of $p_A$ is labeled by a number between $1$ and $h_i$.
%\item For each $i=1,\ldots,n$ the $i$-th down step of $p_B$ is labeled by a number between $1$ and $h_i^*$.
\end{itemize}
\end{theorem}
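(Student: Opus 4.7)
The plan is to prove Theorem~\ref{t.bij} by constructing an explicit inverse to the crossout correspondence and verifying both directions by induction on $n$.

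For well-definedness of the forward map, the Dyck conditions on $p_A$ and $p_B$ follow from pigeonhole applied to the marking procedure: at iteration $k$, only $2(k-1)$ morsels have been marked, so Bob's pick satisfies $u_k \leq 2k-1$ and Alice's leftmost unmarked position satisfies $a_k \leq 2k$, yielding $|D(p_A)\cap[1,2k-1]| \leq k-1$ and $|\calA\cap[1,2k]| \geq k$ respectively. For the label bound $\ell_k \leq h_k(p_A)$ I would use the explicit formula $h_k(p_A) = v_k - 2k + 1$ and analyze the iteration $i = J^{-1}(k)$ when Alice picks $v_k$; the key observation is that $i \leq v_k - k$, since otherwise $v_k$ would be the smallest unmarked value at iteration $i$ and Bob would eat it, contradicting $v_k \in D(p_A)$. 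The analogous bound $m_k \leq h^*_k(p_B)$ uses the $h^*$-correction to reflect that the last down step of $p_B$ receives no label.

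For the inverse, given a valid 4-tuple I would set $\calB = \{d-1 : d \in D(p_B),\, d \leq 2n+1\}$, $\calA = [2n]\setminus\calB$, and recover permutations $J, K \in \SS_n$ from $(\ell_i)$ and $(m_i)$ via the standard Lehmer-code bijection (well-defined since $h_k(p_A) \leq n-k+1$, from $v_k \leq n+k$, and similarly $h^*_k(p_B) \leq n-k+1$). Then define $w$ on A-positions by $w(a_i) = v_{J(i)}$ (with $a_1<\cdots<a_n$ the sorted $\calA$ and $v_1<\cdots<v_n$ the sorted $D(p_A)$), and on B-positions by $w(c_{K(i)}) = u_i$ (with $c_1<\cdots<c_n$ the sorted $\calB$ and $u_1<\cdots<u_n$ the sorted $[2n]\setminus D(p_A)$). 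To show this $w$ is the desired inverse I induct on $n$, peeling off the first iteration of the marking procedure applied to the constructed $w$. The crucial step is that when $1 \in \calB$ (i.e., $c_1 = 1$), the constraint $h^*_1(p_B) = 1$ forces $m_1 = 1$ and hence $K(1) = 1$, placing value $1$ at position $1$ so that Alice takes position $2$ first; when $1 \in \calA$, $c_{K(1)} \geq c_1 \geq 2$ and Alice correctly takes position $1$. Either way, the first iteration produces the expected $(q_1, a_1)$; deleting the $J(1)$-th down step of $p_A$ and the $K(1)$-th of $p_B$ (and relabeling the remaining labels and positions/values) gives a reduced 4-tuple on $[2n-2]$ satisfying the theorem's hypotheses, and the inductive hypothesis concludes.

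The main obstacle is verifying the two label bounds and showing that the peeling step preserves them. The $m_k$-bound is especially delicate because of the $h^*$-correction, but this correction is also precisely what makes the induction work: it is the constraint $h^*_1(p_B)=1$ that forces $m_1=1$ whenever $c_1=1$, locking in the correct first-iteration behavior.
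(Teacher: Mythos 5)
Your inverse construction coincides with the paper's: the paper likewise marks the B\nobreakdash-boxes as $\{d-1 : d\in D(p_B)\setminus\{2n+2\}\}$, writes the $i$th smallest element of $D(p_A)$ into the $\ell_i$th empty A\nobreakdash-box from the left, and fills the B\nobreakdash-boxes left to right with the $m_i$th smallest remaining element of $[2n]\setminus D(p_A)$; your Lehmer-code phrasing via $J$ and $K$ is the same map. The paper's proof consists of this construction alone, checking only that the required boxes always exist (via $\ell_i\le h_i(p_A)\le n-i+1$ and $m_i\le h_i^*(p_B)\le n+1-i$), and leaves the forward well-definedness and the mutual-inverse verification implicit. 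Your plan to supply those missing verifications (pigeonhole for the Dyck conditions, induction peeling off the first iteration) is sound in outline and more complete in scope than the published argument.

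However, the specific argument you give for the bound $\ell_k\le h_k(p_A)$ does not close. With $v_1<\cdots<v_n$ the elements of $D(p_A)$ and $i$ the iteration at which Alice marks $v_k$, your observation $i\le v_k-k$ yields only $\ell_k\le 1+(i-1)\le v_k-k$, which exceeds $h_k(p_A)=v_k-2k+1$ by $k-1$; it suffices only for $k=1$. The missing ingredient is the following: since Bob always takes the minimum unmarked value, his picks below $v_k$ are exactly $u_1<\cdots<u_{v_k-k}$, made at iterations $1,\dots,v_k-k$, and his pick at iteration $v_k-k+1$ exceeds $v_k$; hence \emph{every} value $\le v_k$, in particular each of $v_1,\dots,v_k$, is marked by the end of iteration $v_k-k$. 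Since Alice makes one mark per iteration, at most $(v_k-k)-k=v_k-2k$ of her picks among iterations $1,\dots,v_k-k$ exceed $v_k$, which is exactly the count $\ell_k-1$ you need to bound. The transposed statement (all of the B\nobreakdash-positions $c_1,\dots,c_k$ are marked within the first $c_k-k+1$ iterations, because Alice marks positions left to right) is what you need for $m_k\le h_k(p_B)$, with a separate short argument for the $h^*$ correction. The same lemma is also needed to justify your peeling step: deleting the up step at value $u_1=1$ and the down step at value $v_{J(1)}$ from $p_A$ lowers every intermediate height by one, so you must rule out a return to height $0$ strictly between them, and that is again the statement that if the path returns to height $0$ at $j<v_{J(1)}$ then Alice's first pick would already lie in $[1,j]$. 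All of this is repairable, but as written your ``key observation'' is necessary and not sufficient.
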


We prove this theorem in \textsection\ref{s.proof} and derive some corollaries in \textsection\ref{s.cors}.  An immediate and rather surprising consequence is that if the permutation $w$ is selected uniformly at random, then the pairs $(p_A, (\ell_i)_{i=1}^n)$ and $(p_B, (m_i)_{i=1}^n)$ are independent random variables.  In other words, Alice's meal viewed in terms of Alice's preference order (i.e., her own rankings of the morsels she eats, as well as what order she eats them in) is independent of Bob's meal viewed in terms of Bob's preference order. 

\subsection*{Related work}
Theorem~\ref{t.bij} is reminiscent of the Fran\c{c}on-Viennot bijection \cite{FV} (see also \cite[\textsection5.2.15]{GJ}), which assigns to $w$ a \emph{single} labeled path of length $2n-1$, which may have East as well as Northeast and Southeast steps.  The East steps have labels between $1$ and $2h$, and the Northeast and Southeast steps have labels between $1$ and $h$, where $h$ is the height of the step. %check

Hopkins and Jones \cite{HJ} pioneered the combinatorial study of Ethiopian dinner.  Defining a player's  \emph{outcome} as the set of morsels eaten by that player, they asked: As $w$ varies over all permutations of $[2n]$, how many different outcomes can arise under optimal play? They showed that for Alice the answer is the Catalan number $C_n = \frac{1}{n+1} {2n \choose n}$, and for Bob the answer is $C_{n+1}$.  For example, when $n=2$ the $2$ possible outcomes for Alice are $\{w^{-1}(2), w^{-1}(4)\}$ and $\{w^{-1}(3), w^{-1}(4)\}$; and the $5$ possible outcomes for Bob are $\{1,3\}$, $\{1,4\}$, $\{2,3\}$, $\{2,4\}$ and $\{3,4\}$.  Recalling that $C_n$ counts the number of Dyck paths of length $2n$, Theorem~\ref{t.bij} refines these results.

\section{Proof of Theorem~\texorpdfstring{\ref{t.bij}}{1}}
\label{s.proof}

\begin{proof}
We construct the inverse of the map $w \mapsto (p_A,p_B,\ell,m)$.
Given Dyck paths $p_A,p_B$ with labels $\ell,m$
on the down steps as prescribed by the statement of the theorem, construct $w \in  \mathfrak{S}_{2n}$ as follows.  

Start with $2n$ empty boxes in a line. We will write the numbers $1, \ldots, 2n$ into the boxes, one at a time, to arrive at the line notation of $w$.  Mark ``B'' below the boxes on positions belonging to the set $\{i-1 \mid i \in D(p_B)\backslash \{2n+2\}\}$, and mark ``A'' below the remaining boxes.  
%First we fill the empty boxes marked $A$ with the numbers from $D(p_A)$, and then we fill the empty boxes marked $B$ with the numbers from $[2n]-D(p_A)$ to obtain $w$.

Let $D(p_A)=\{a_1<a_2<\cdots <a_n\}$, and let $\ell_i$ be the label of $p_A$ on down step $a_i$.  Fill in the boxes marked $A$ one at a time as follows: For each $i=1,\ldots,n$, write $a_i$ in the $\ell_i$th empty box marked $A$, counting from the left.  This is always possible because $\ell_i \leq h_i$ and at most $n-h_i$ boxes marked A have been filled, so there are at least $h_i$ empty boxes marked A remaining.

Now we fill the boxes marked $B$ with the numbers in $S := [2n] - D(p_A)$. Let $S=\{b_1<b_2<\cdots <b_n\}$, and let $m_i$ be the label of $p_B$ on its $i$th down step. First write $b_{m_1}$ in the leftmost box marked B.  Delete $b_{m_1}$ from $S$ and re-index by $1,\ldots,n-1$ to obtain a new set $S=\{b_1<b_2<\cdots <b_{n-1}\}$.
Likewise, for each $2 \leq i \leq n$, write $b_{m_i}$ in the leftmost empty box marked B.  Delete $b_{m_i}$ from $S$ and re-index to obtain a new set $S=\{b_1<b_2<\cdots <b_{n-i}\}$.  This is always possible because $m_i \leq h_i^* \leq n+1-i$.
%at most $n-h_i^*$ elements have been deleted from $S$, so at least $h_i^*$ elements remain.

After these $n$ steps are completed, we have filled all $2n$ boxes to obtain the line notation of the permutation $w$.  
%\hfill $\qed$
\end{proof}

\section{\texorpdfstring{$q$}{q}-analogues and marginals}
\label{s.cors}

Write $\D_{2n}$ for the set of all Dyck paths of length $2n$.  Given $\alpha \in \D_{2n}$ and $\beta \in \D_{2n+2}$, denote by $\SS(\alpha,\beta)$ the set of all $w \in \SS_{2n}$ such that $p_A(w)=\alpha$ and $p_B(w)=\beta$.  By Theorem~\ref{t.bij} we have $\# \SS(\alpha,\beta) = \prod_{i=1}^n h_i(\alpha)h_i^*(\beta)$.  We can refine this enumeration by keeping track of certain inversions in $w$.

\begin{definition}
For $X,Y \in \{A,B\}$, an \emph{XY inversion} of $w \in \SS_{2n}$ is a pair of indices $i<j$ such that $w(i)>w(j)$ and $i$ is marked $X$ and $j$ is marked $Y$. 
\end{definition}

Each AA inversion represents a pair of morsels that Alice eats ``out of order'' (i.e., she eats the less preferred morsel first) and each BB inversion represents a pair of morsels that Bob eats out of order. These statistics were not among the over $9800$ permutation statistics indexed by \url{findstat.org} \cite{findstat} as of June 2013.

Denote by $aa(w)$ and $bb(w)$ respectively the number of AA inversions and BB inversions in $w$.  Let $q$ and $r$ be formal variables, and for integer $h$ write $[h]_q := 1+q+\cdots + q^{h-1}$.

\begin{theorem}
\[ \sum_{w \in \SS(\alpha,\beta)} q^{aa(w)} r^{bb(w)} = \prod_{i=1}^n [h_i(\alpha)]_q [h_i^*(\beta)]_r. \]
\end{theorem}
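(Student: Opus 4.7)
The plan is to use the bijection from Theorem~\ref{t.bij} and show that the statistics $aa$ and $bb$ decompose cleanly under it, namely $aa(w) = \sum_{i=1}^n (\ell_i - 1)$ and $bb(w) = \sum_{i=1}^n (m_i - 1)$, where $(\ell_i)$ and $(m_i)$ are the label sequences associated to $w$. Once this is established, the identity follows immediately by factoring the sum over the admissible range $1 \le \ell_i \le h_i(\alpha)$ and $1 \le m_i \le h_i^*(\beta)$ guaranteed by Theorem~\ref{t.bij}.

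For the AA claim, I would work directly from the definition of the labels on $p_A$. Listing the A-positions as $a_1 < \ldots < a_n$, the label $\ell_{J(i)}$ assigned to the down step corresponding to $w(a_i)$ equals $1 + \#\{j < i : w(a_j) > w(a_i)\}$. The set counted here is, by definition, precisely the set of AA inversions of $w$ whose right endpoint is the position $a_i$. Summing $\ell_{J(i)} - 1$ over $i$ thus enumerates every AA inversion exactly once, yielding $aa(w) = \sum_k (\ell_k - 1)$ after reindexing by $J$.

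For the BB claim the argument is parallel but requires a little more care because the B-positions are indexed by \emph{$w$-value} rather than by position: $b_1,\ldots,b_n$ are the B-positions listed so that $w(b_1) < \ldots < w(b_n)$. For a pair $j < i$, the condition $b_j > b_i$ says that position $b_i$ lies to the left of position $b_j$, while $j < i$ already forces $w(b_i) > w(b_j)$. Hence $\{j < i : b_j > b_i\}$ is exactly the set of BB inversions whose \emph{left} endpoint is $b_i$, and summing over $i$ recovers all BB inversions once.

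Combining these two identities with Theorem~\ref{t.bij}, which tells us that summing over $w \in \SS(\alpha,\beta)$ is the same as summing independently over the $\ell_i \in \{1,\ldots,h_i(\alpha)\}$ and $m_i \in \{1,\ldots,h_i^*(\beta)\}$, gives
\[
\sum_{w \in \SS(\alpha,\beta)} q^{aa(w)} r^{bb(w)}
= \prod_{i=1}^n \Bigl(\sum_{\ell=1}^{h_i(\alpha)} q^{\ell - 1}\Bigr)\Bigl(\sum_{m=1}^{h_i^*(\beta)} r^{m - 1}\Bigr)
= \prod_{i=1}^n [h_i(\alpha)]_q\,[h_i^*(\beta)]_r.
\]
The main thing to be careful about is the reindexing by $J$ and $K$ in the two statistic identities; the $q$- and $r$-factorizations themselves are automatic once the statistics are shown to equal the corresponding sums of $\ell_i - 1$ and $m_i - 1$.
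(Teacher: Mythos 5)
Your proof is correct and follows essentially the same route as the paper's: both reduce the identity to the statistic decompositions $aa(w) = \sum_{i}(\ell_i - 1)$ and $bb(w) = \sum_{i}(m_i - 1)$ and then factor the sum over the admissible labels into the product of $q$- and $r$-integers. The only difference is that you verify these decompositions directly from the forward definition of the labels (counting each AA inversion at its right endpoint and each BB inversion at its left endpoint, with the reindexing by $J$ and $K$ handled correctly), whereas the paper reads them off the inverse box-filling construction; both verifications are sound.
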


\begin{proof}
When we write $a_i$ in the $\ell_i$th empty box marked A, the $\ell_i-1$ empty boxes marked A to its left will later receive larger numbers, so the total number of AA inversions created is $\sum_{i=1}^n (\ell_i-1)$.

When we write $b_{m_i}$ in the leftmost empty box marked B, the smaller numbers $b_{m_1},\ldots,b_{m_{i-1}}$ will later be placed in boxes marked B to its right, so the total number of BB inversions created is $\sum_{i=1}^n (m_i -1)$.  Summing over all labelings $\ell$ and $m$ satisfying the height restrictions $1 \leq \ell_i \leq h_i(\alpha)$ and $1 \leq m_i \leq h_i^*(\beta)$ for $i=1,\ldots,n$, we obtain

	\begin{align*} \sum_{w \in \SS(\alpha,\beta)} q^{aa(w)} r^{bb(w)} &= \sum_{\ell,m} q^{\ell_1-1} \cdots q^{\ell_n-1} r^{m_1-1} \cdots r^{m_n-1} \\ &= [h_1(\alpha)]_q \cdots [h_n(\alpha)]_q [h_1^*(\beta)]_r \cdots [h_n^*(\beta)]_r. \qed
	\end{align*}
\renewcommand{\qedsymbol}{}
\end{proof}

\begin{theorem}[Alice's Marginal]
\label{t.alice} 
Fix a Dyck path $\alpha \in \D_{2n}$. The number of permutations $w \in \SS_{2n}$ such that $p_A(w)=\alpha$ is \begin{equation} \label{cute} 2\cdot 4 \cdot 6 \cdot \ldots \cdot (2n) \prod_{i=1}^n h_i(\alpha).\end{equation}
\end{theorem}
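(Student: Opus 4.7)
The plan is to invoke Theorem~\ref{t.bij} with $p_A(w) = \alpha$ fixed and to count the remaining free data. For a fixed $\alpha$, each label $\ell_i$ is an independent choice from $\{1,\ldots,h_i(\alpha)\}$, contributing a factor of $\prod_{i=1}^n h_i(\alpha)$. The pair $(\beta,m)$ then ranges over all Dyck paths $\beta \in \D_{2n+2}$ with labels $1 \leq m_i \leq h_i^*(\beta)$, independently of $\alpha$ and $\ell$. Thus
\[
\#\{w \in \SS_{2n} : p_A(w) = \alpha\} \;=\; \prod_{i=1}^n h_i(\alpha) \cdot N_n,
\qquad\text{where}\qquad
N_n := \sum_{\beta \in \D_{2n+2}} \prod_{i=1}^n h_i^*(\beta).
\]
It remains to show $N_n = (2n)!! = 2 \cdot 4 \cdots (2n)$.

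To evaluate $N_n$, I would apply Theorem~\ref{t.bij} once more, now summing over $\alpha$ as well:
\[
(2n)! \;=\; |\SS_{2n}| \;=\; \Bigl(\sum_{\alpha \in \D_{2n}} \prod_{i=1}^n h_i(\alpha)\Bigr) \cdot N_n.
\]
The identity $\sum_{\alpha \in \D_{2n}} \prod_{i=1}^n h_i(\alpha) = (2n-1)!!$ is classical: it is the $q = 1$ specialization of the identity displayed in the abstract, and it also arises from the Hermite-history bijection between labeled Dyck paths and perfect matchings of $[2n]$. Dividing yields $N_n = (2n)!/(2n-1)!! = (2n)!!$, giving \eqref{cute}.

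The main obstacle is that this argument leans on the classical identity $\sum_\alpha \prod h_i(\alpha) = (2n-1)!!$. Since one of the paper's stated goals is to give new proofs of such identities via the crossout bijection, a more self-contained route would be to prove $N_n = (2n)!!$ directly, for instance by a bijection between labeled pairs $(\beta, m)$ and sequences $(c_1,\ldots,c_n)$ with $c_i \in [2i]$, or by induction using the first-return decomposition $\beta = U \beta_1 D \beta_2$ with $\beta_1 \in \D_{2k}$, $\beta_2 \in \D_{2(n-k)}$. Under this decomposition the $\beta_1$-heights shift up by $1$, the middle $D$ sits at height $1$, and the $\beta_2$-heights are unchanged; the subtle point is that $h_i^*(\beta) = h_i(\beta) - 1$ versus $h_i(\beta)$ is governed by the trailing run of down steps, which in turn is controlled by whether $\beta_2$ is empty. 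Unraveling this carefully should yield the recursion $N_n = 2n \cdot N_{n-1}$ with base case $N_0 = 1$, and hence an inductive proof independent of $(\ast)$.
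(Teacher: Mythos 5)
Your argument is correct, but it takes a genuinely different route from the paper. The paper proves Theorem~\ref{t.alice} directly, without invoking Theorem~\ref{t.bij}: it runs a box-filling process synchronized with the steps of $\alpha$ (write $k$ into an empty circled box when the $k$th step of $\alpha$ is a down step, giving $h_{d+1}(\alpha)$ choices; write $k$ into an empty uncircled box and circle a new box when it is an up step, giving $2n-2u$ choices), so the factor $2\cdot 4\cdots (2n)\prod_i h_i(\alpha)$ falls out as a literal product of choice counts. You instead fix $\alpha$ in Theorem~\ref{t.bij}, peel off the factor $\prod_i h_i(\alpha)$ from the labels $\ell$, and reduce to evaluating $N_n=\sum_{\beta\in\D_{2n+2}}\prod_i h_i^*(\beta)$, which you get by dividing $(2n)!$ by the classical Hermite-history count $\sum_{\alpha}\prod_i h_i(\alpha)=(2n-1)!!$. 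That is a valid proof, since the classical identity has an independent proof via the bijection with perfect matchings of $[2n]$ (which the paper itself uses later). But you have correctly identified the cost: the paper derives both identities of the Corollary \emph{from} Theorems~\ref{t.alice} and~\ref{t.bob}, so importing one of them as an external fact would make that corollary partially circular within the paper's logical architecture, and it undercuts the stated aim of giving new proofs of such identities. The paper's direct argument also buys something further: the same box-filling process, with choices weighted by position, is exactly what yields the $(q,t)$-refinement in Theorem~\ref{qt}, which your route would not produce. Your sketched alternative (proving $N_n=(2n)!!$ by a first-return recursion or a direct product bijection) would repair the self-containment issue, though note that the paper's own remedy is simpler still: Theorem~\ref{t.bob} is proved by the mirror-image box-filling argument, and the two marginals together give both double-factorial sums for free.
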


\begin{proof} Given a Dyck path $\alpha$,
we describe a process which generates all permutations $w \in \SS_{2n}$ such that $p_A(w)=\alpha$.
Start with $2n$ empty boxes in a line. We write the numbers $1, \ldots, 2n$ into the boxes, one number at a time, to arrive at the line notation of a permutation $w$.  This write-in process  proceeds in $2n$ rounds.  Round $k$ consists of the following:

\emph{If the $k$th step of $\alpha$ is a down step}: Write $k$ in an empty circled box.

\emph{If the $k$th step of $\alpha$ is an up step}: Write $k$ in an empty uncircled box, and then circle the leftmost empty uncircled box.

In round $1$, we pick one of the $2n$ empty boxes and write $1$ in it. Circle the leftmost box that is empty and is not circled. This is the end of round $1$.

At the beginning of round $k$, the numbers $1, 2, \ldots, k-1$ are already written in some of the $2n$ boxes, and a total of $u$ boxes are circled, where $u$ is the number of up steps among the first $k-1$ steps of $\alpha$.  Among these circled boxes, $d$ of them have numbers written in them already, where $d = k-1-u$ is the number of down steps among the first $k-1$ steps of $\alpha$. 

If the $k$th step of $\alpha$ is a down step, then the number $k$ must be written in one of the empty circled boxes.  There are $u-d = h_{d+1}(\alpha)$ such boxes.

If the $k$th step of $\alpha$ is an up step, then the number $k$ must be written in one of the empty uncircled boxes.  There are $2n - u - (k-1) + d = 2n - 2u$ such boxes.
 
 The above process yields all 
  $w \in \mathfrak{S}_{2n}$ such that $p_A(w)=\alpha$.
For the $(d+1)$st down step we had $h_{d+1}(\alpha)$ choices, and for the $(u+1)$st up step we had $2n-2u$ choices, so the total number of choices is given by (\ref{cute}).
\end{proof}

An analogous argument (this time filling in the $2n$ boxes from left to right) proves the following.

\begin{theorem}[Bob's Marginal]
\label{t.bob}
Fix a Dyck path $\beta \in \D_{2n+2}$. The number of permutations $w \in \SS_{2n}$ such that $p_B(w)=\beta$ is 
	\[ 1\cdot 3 \cdot 5 \cdot \ldots \cdot (2n-1) \prod_{i=1}^n h_i^*(\beta).\]
\end{theorem}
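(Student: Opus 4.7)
The plan is to mirror the proof of Theorem~\ref{t.alice}, exchanging the roles of values and positions: this time the rounds index positions $k = 1, \ldots, 2n$ (processed left to right) and at each round we fill in a value. Classify position $k$ as A or B according to whether step $k+1$ of $\beta$ is up or down (step $2n+2$, always down, is excluded). In place of circling boxes I will circle \emph{values}: initially value $1$ is circled and all others uncircled. In round $k$, if position $k$ is a B-position, write an unwritten circled value into position $k$; if position $k$ is an A-position, write an unwritten uncircled value into position $k$ and then, provided one exists, circle the smallest remaining unwritten uncircled value.

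The count proceeds round by round. After $u$ A-rounds and $d$ B-rounds have been executed, and provided the last A-round has not yet passed, there are $2n-1-2u$ unwritten uncircled values and $1+u-d$ unwritten circled values. Thus the $n$ A-rounds collectively contribute $\prod_{u=0}^{n-1}(2n-1-2u) = (2n-1)!!$. At the $(d+1)$-st B-round, the number of choices equals the current number of unwritten circled values, and I claim this equals $h_{d+1}^*(\beta)$.

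Establishing this claim is the core point. When the $(d+1)$-st B-round precedes the last A-round, there is still an A-round to come, so an up step of $\beta$ lies to the right of the $(d+1)$-st down step, whence $h_{d+1}^* = h_{d+1} = 1+u-d$. When the $(d+1)$-st B-round follows the last A-round, the ``circle the smallest'' step at that last A-round was a no-op because the uncircled pool had already been exhausted; this reduces the total circled count by one compared with the previous case. Moreover all remaining B-rounds then correspond to the final descending run of $\beta$ (where $h_i^* = h_i - 1$), and the deficit of one accounts precisely for the $-1$ in $h_{d+1}^* = h_{d+1} - 1$. Multiplying the per-round counts gives $(2n-1)!! \prod_{i=1}^n h_i^*(\beta)$.

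The main obstacle is this case analysis near the end of $\beta$, where the interaction between the last A-round and the final descent determines whether $h_i^*$ rather than $h_i$ appears. Once the invariants are in place, the standard argument from the proof of Theorem~\ref{t.alice} applies: each execution of the process produces a permutation with $p_B(w) = \beta$, and the execution is reconstructed uniquely from $w$ (since writes are forced by $w(k)$ and circlings are deterministic), so distinct choice sequences yield distinct permutations and every valid $w$ arises exactly once.
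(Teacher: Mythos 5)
Your proposal is correct and is essentially the paper's intended argument: the paper proves this theorem by a one-line appeal to the left-to-right analogue of the proof of Theorem~\ref{t.alice}, and your write-in process (circling values instead of boxes, processing positions left to right) is exactly that analogue, carried out correctly. The one genuinely new wrinkle relative to Alice's marginal---that the failed circling at the last A-round produces the factor $h_i^*$ rather than $h_i$ for the B-rounds in the final descent---is handled properly in your case analysis.
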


\begin{corollary}
	\[ \sum_{\alpha \in \D_{2n}} \prod_{i=1}^n h_i(\alpha) = 1 \cdot 3 \cdot \ldots \cdot (2n-1) \]
	\[ \sum_{\beta \in \D_{2n+2}} \prod_{i=1}^n h_i^*(\beta) = 2 \cdot 4 \cdot \ldots \cdot 2n. \]\end{corollary}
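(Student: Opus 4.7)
The plan is to deduce both identities as immediate corollaries of the marginal theorems, by double-counting $\SS_{2n}$ in two different ways.

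For the first identity, I would observe that the map $w \mapsto p_A(w)$ partitions $\SS_{2n}$ into fibers indexed by $\alpha \in \D_{2n}$. Summing cardinalities and pulling out the $\alpha$-independent factor furnished by Theorem~\ref{t.alice} gives
\[
(2n)! = |\SS_{2n}| = \sum_{\alpha \in \D_{2n}} \#\{w : p_A(w)=\alpha\} = 2 \cdot 4 \cdots (2n) \sum_{\alpha \in \D_{2n}} \prod_{i=1}^n h_i(\alpha).
\]
Dividing both sides by $2 \cdot 4 \cdots (2n) = 2^n n!$ and invoking the standard identity $(2n)!/(2^n n!) = 1 \cdot 3 \cdots (2n-1)$ yields the first formula.

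The second identity follows by exactly the same double-counting argument applied to the map $w \mapsto p_B(w)$, which partitions $\SS_{2n}$ over $\D_{2n+2}$. Theorem~\ref{t.bob} then gives
\[
(2n)! = 1 \cdot 3 \cdots (2n-1) \sum_{\beta \in \D_{2n+2}} \prod_{i=1}^n h_i^*(\beta),
\]
and dividing through produces the claimed evaluation.

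There is no real obstacle here: the combinatorial work has already been done in the marginal theorems. The only thing worth checking is that in each marginal formula the constant prefactor really is independent of the Dyck path (which is visible from the statements: it records only the number of up-step insertions, which depends on $2n$ alone), so that it factors out of the sum cleanly. The proof is a one-line division in each case.
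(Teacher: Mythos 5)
Your proposal is correct and is essentially the paper's own argument: sum each marginal theorem over all Dyck paths, note that the total is $(2n)!$ since every $w \in \SS_{2n}$ is counted exactly once, and divide by the path-independent prefactor using $(2n)!/(2\cdot 4\cdots (2n)) = 1\cdot 3\cdots(2n-1)$. Nothing is missing; the paper's proof is the same one-line double count.
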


\begin{proof}
The first identity follows from Theorem~\ref{t.alice} by summing over all Dyck paths $\alpha \in \D_{2n}$, the second from Theorem~\ref{t.bob} by summing over all $\beta \in \D_{2n+2}$. In both cases the sum is $(2n)!$ because each $w\in \mathfrak{S}_{2n}$ is counted once.
\end{proof}

We can refine Theorem~\ref{t.alice} to count AA inversions along with another statistic 
	\[ z(w) := \# \{ i<j \mid w(i)<w(j) \mbox{ and } i \in \mathcal{B}(w) \}. \]
%This counts pairs of morsels  $i,j$ for which Alice and Bob both prefer j to i, and Bob eats i.
% i.e. for each morsel Bob eats, add the number of morsels that both players prefer to it.

\begin{theorem} \label{qt}
For any $\alpha \in \D_{2n}$, we have
	 \begin{equation} \label{eq:qt} \sum_{w \in \SS_{2n} \; : \; p_A(w)=\alpha} q^{aa(w)} t^{z(w)} =  [2]_t  [4]_t  [6]_t \cdots [2n]_t \prod_{i=1}^n [h_i(\alpha)]_q.  \end{equation} 
\end{theorem}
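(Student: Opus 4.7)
The plan is to leverage the write-in procedure from the proof of Theorem~\ref{t.alice}, which generates every $w \in \SS_{2n}$ with $p_A(w)=\alpha$ exactly once via independent choices in $2n$ rounds. I would check that the contributions to $aa(w)$ and to $z(w)$ accumulated over the rounds factor cleanly, producing $[h_i(\alpha)]_q$ from each down-step round and $[2n-2u]_t$ from each up-step round, so that the total generating function is the claimed product.

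The key structural input is a positional invariant: at the start of every round, every empty circled box lies strictly to the left of every empty uncircled box. This is proved by induction on the round number, using the fact that a new circle is always placed at the current leftmost empty uncircled position---an empty uncircled box sitting to the left of an empty circled box would immediately contradict the rule that placed that circle. Granted the invariant, the down-step rounds behave exactly as in \textsection\ref{s.cors}: in the $(d+1)$st down-step round, placing $k$ in the $\ell$th leftmost empty circled box creates $\ell-1$ new AA inversions (the $\ell-1$ circles to the left will later receive larger numbers) and contributes $0$ to $z(w)$ (since the filled position is marked A), yielding a factor of $[h_{d+1}(\alpha)]_q$.

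For an up-step round with $u$ prior up steps, there are $2n-2u$ empty uncircled boxes, and by the invariant every other empty box at that moment is circled and lies strictly to the left of them. Hence if $k$ is placed in the $j$th empty uncircled box (from the left), the number of empty positions strictly to its right is precisely $2n-2u-j$; each of these will later receive a number larger than $k$, so the round contributes $2n-2u-j$ new pairs to $z(w)$, all with the smaller element at a B-marked position, and nothing to $aa(w)$. Summing $t^{2n-2u-j}$ over $j=1,\ldots,2n-2u$ produces $[2n-2u]_t$, and letting $u=0,1,\ldots,n-1$ assembles $[2]_t[4]_t\cdots[2n]_t$. Multiplying the round-by-round factors yields \eqref{eq:qt}. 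I expect the main obstacle to be verifying the invariant cleanly, since it is precisely what ensures that the ``empty positions to the right'' count in an up-step round sees no stray circled boxes, and this is what lets the two statistics separate into independent per-round contributions.
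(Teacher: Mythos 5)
Your proposal is correct and follows essentially the same route as the paper: both run the write-in process from the proof of Theorem~\ref{t.alice} and track the per-round contributions to $aa(w)$ and $z(w)$, with the separation of the two statistics resting on the positional invariant that the paper only asserts parenthetically (``all boxes to its right are currently uncircled'') and that you prove explicitly. Your verification of the invariant via the leftmost-circling rule is exactly the missing detail, and the rest of the computation matches the paper's.
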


\begin{proof}
The circled boxes in the proof of Theorem~\ref{t.alice} are those marked A.  On the $(d+1)$st down step of $\alpha$, we write a number in one of the $h_{d+1}$ empty circled boxes, which creates an AA inversion with each empty circled box to its left.

The uncircled boxes are those marked B.  On the $(u+1)$st up step of $\alpha$, we write a number in one of the $2n-2u$ empty uncircled boxes, which creates a non-inversion with each empty box to its right. (Note that all boxes to its right are currently uncircled, although some may be circled later.)  \end{proof}

Next we derive from Theorem~\ref{qt} the identity mentioned in the abstract.  The idea will be to express $z(w)$ in terms of AB and BB inversions, and then set $q=t^{-1}$.  First we make the observation that \emph{there are no BA inversions}, i.e., $ba(w)=0$ for all $w \in \SS_{2n}$.  Indeed, suppose that $i \in \mathcal{B}(w)$ and $j \in \mathcal{A}(w)$ and $i<j$.  When $w(j)$ is marked A it is the leftmost unmarked number, so $w(i)$ must already have been marked B.  When $w(i)$ was marked B it was the smallest unmarked number, so $w(i)<w(j)$, which shows that the pair $(i,j)$ is not an inversion.

The equation $ba(w)=0$ says that at the end of the game, there is no pair of morsels that Alice and Bob would agree to trade: indeed, $i<j$ is such a pair if and only if $i$ is marked B and $j$ is marked A and $w(i)>w(j)$ (so that Bob prefers $j$ to $i$ but received $i$, and Alice prefers $i$ to $j$ but received $j$).  
%Such a pair is precisely a BA inversion.  
The lack of such pairs is a consequence of the Pareto efficiency of the crossout strategy, proved by Brams and Straffin \cite[Theorem~1]{BS}.

\begin{lemma}
\begin{equation} \label{eq:z} z(w) = n^2+\sum_{i=1}^n (h_i(p_A(w))-1)-ab(w)-bb(w).\end{equation} \end{lemma}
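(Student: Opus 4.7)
The plan is to decompose $z(w)$ by the marking of the second index, and then to apply the identity $ba(w)=0$ (just established immediately before the lemma) twice.

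\emph{Step 1: decompose.} Write $bb^*(w)$ and $ba^*(w)$ for the numbers of BB non-inversions and BA non-inversions, respectively. Classifying the pairs counted by $z(w)$ by whether $j\in\mathcal{A}$ or $j\in\mathcal{B}$ gives $z(w)=bb^*(w)+ba^*(w)=\binom{n}{2}-bb(w)+ba^*(w)$. Since $ba(w)=0$, every pair $(i,j)$ with $i<j$, $i\in\mathcal{B}$, $j\in\mathcal{A}$ is automatically a non-inversion, so $ba^*(w)$ equals the total number of B-then-A pairs. Counting mixed-type pairs two ways (each A-position with each B-position), the total is $n^2$, whence $ba^*(w)=n^2-ab(w)-ab^*(w)$, where $ab^*(w)$ denotes the number of AB non-inversions. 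Substituting,
$$z(w) = n^2 + \binom{n}{2} - ab(w) - bb(w) - ab^*(w).$$
So it suffices to prove $ab^*(w) = \binom{n}{2} - \sum_{i=1}^n (h_i(p_A(w))-1)$.

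\emph{Step 2: evaluate $ab^*(w)$ from $p_A(w)$ alone.} Apply $ba(w)=0$ a second time: if $v_A$ is an A-value and $v_B$ a B-value with $v_A<v_B$, then $w^{-1}(v_A)<w^{-1}(v_B)$, since otherwise $(w^{-1}(v_B),w^{-1}(v_A))$ would form a BA inversion. Therefore $ab^*(w)$ equals the number of pairs $(v_A,v_B)$ with $v_A\in D(p_A(w))$, $v_B\in [2n]\setminus D(p_A(w))$, and $v_A<v_B$ — a statistic depending only on $\alpha:=p_A(w)$. Writing $d_i$ for the position of the $i$-th down step of $\alpha$, the number of up steps of $\alpha$ strictly to the right of $d_i$ is $n-(d_i-i)$; summing over $i$ and using $h_i(\alpha)=d_i-2i+1$ collapses the expression to $\binom{n}{2}-\sum_i(h_i(\alpha)-1)$. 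Plugging back into the display from Step 1 yields the lemma.

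The main obstacle is the second use of $ba(w)=0$ in Step 2: recognizing that it is exactly what forces $ab^*(w)$ to depend only on $p_A(w)$. Once that observation is in hand, both parts of the proof reduce to routine bookkeeping on mixed-type pairs and a short arithmetic manipulation with the heights.
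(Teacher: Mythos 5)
Your proof is correct and follows essentially the same route as the paper: both decompose $z(w)$ by inclusion--exclusion over the four pair types and invoke $ba(w)=0$. In fact your Step 2 supplies the justification (via $h_i(\alpha)=d_i-2i+1$) for the counting identity $\#\{i<j \mid i\in\mathcal{B}(w)\}=n^2+\sum_i(h_i-1)-ab(w)$ that the paper's proof merely asserts with ``Note that.''
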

 \begin{proof} By the definition of $z(w)$, 
\[ z(w)=\# \{ i<j \mid  i \in \mathcal{B}(w)\}-\# \{ i<j \mid w(i)>w(j) \mbox{ and } i \in \mathcal{B}(w) \}.\] Note that \[\# \{ i<j \mid  i \in \mathcal{B}(w)\}=n^2+\sum_{i \in D(\a)}(h_i(p_A(w))-1)-ab(w)\] and \[\# \{ i<j \mid w(i)>w(j) \mbox{ and } i \in \mathcal{B}(w) \}=bb(w)+ba(w),\] yielding \eqref{eq:z} since $ba(w)=0$.
\end{proof}

\begin{corollary}\label{cor:a} \begin{equation} \label{eq:a} \sum_{\alpha \in \mathcal{D}_{2n}}  \prod_{i=1}^n q^{h_i(\a) -1} [h_i(\a)]_q = [1]_q [3]_q \cdots [2n-1]_q. \end{equation} \end{corollary}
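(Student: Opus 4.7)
The plan is to derive the identity by substituting $t = q^{-1}$ into Theorem~\ref{qt} and then summing over all Dyck paths $\alpha \in \mathcal{D}_{2n}$. The key mechanism is that the total of $\inv(w)$ over $\SS_{2n}$ evaluates to the $q$-factorial $[2n]_q!$, and the extra odd/even factorization of $[2n]_q! / ([2]_q[4]_q\cdots[2n]_q) = [1]_q[3]_q\cdots[2n-1]_q$ matches the right-hand side of \eqref{eq:a}.

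First I would combine the lemma with the observation that $ba(w) = 0$, which (as the excerpt already notes) gives $\inv(w) = aa(w) + ab(w) + bb(w)$. Substituting this into \eqref{eq:z} yields the clean identity
\[ aa(w) - z(w) = \inv(w) - n^2 - \sum_{i=1}^n (h_i(p_A(w))-1). \]
Next I would specialize Theorem~\ref{qt} at $t = q^{-1}$. The product $[2]_{q^{-1}}[4]_{q^{-1}}\cdots[2n]_{q^{-1}}$ simplifies using $[2k]_{q^{-1}} = q^{-(2k-1)}[2k]_q$, so the total prefactor becomes $q^{-n^2}[2]_q[4]_q\cdots[2n]_q$. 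Plugging in the expression for $aa(w)-z(w)$ derived above and cancelling the common $q^{-n^2}$, Theorem~\ref{qt} becomes
\[ \sum_{w : p_A(w)=\alpha} q^{\inv(w)} = q^{\sum_i(h_i(\alpha)-1)}\, [2]_q[4]_q\cdots[2n]_q \prod_{i=1}^n [h_i(\alpha)]_q. \]

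Now I would sum this identity over $\alpha \in \mathcal{D}_{2n}$. Every $w \in \SS_{2n}$ is counted exactly once in the double sum, so the left-hand side collapses to the standard Mahonian sum $\sum_{w \in \SS_{2n}} q^{\inv(w)} = [2n]_q!$. Dividing both sides by the common factor $[2]_q[4]_q\cdots[2n]_q$ leaves
\[ \sum_{\alpha \in \mathcal{D}_{2n}} q^{\sum_{i=1}^n (h_i(\alpha)-1)} \prod_{i=1}^n [h_i(\alpha)]_q = \frac{[2n]_q!}{[2]_q[4]_q\cdots[2n]_q} = [1]_q[3]_q\cdots[2n-1]_q, \]
and pulling the $q^{\sum_i(h_i(\alpha)-1)}$ inside the product as $\prod_i q^{h_i(\alpha)-1}$ produces \eqref{eq:a}.

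The proof is essentially a bookkeeping exercise once Theorem~\ref{qt} and the lemma are in hand, so there is no substantive obstacle; the only thing to be careful about is the $t=q^{-1}$ manipulation, where one must correctly compute $[2k]_{q^{-1}} = q^{-(2k-1)}[2k]_q$ and verify that the $q^{-n^2}$ from the right-hand side exactly cancels the $q^{-n^2}$ coming from the $\sum_i (h_i(\alpha)-1)$ shift on the left. Once those powers of $q$ balance, recognizing $\sum_w q^{\inv(w)} = [2n]_q!$ and extracting the ratio of consecutive integer $q$-factorials gives the result.
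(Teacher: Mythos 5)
Your proposal is correct and follows essentially the same route as the paper: specialize Theorem~\ref{qt} at $t=q^{-1}$, use the lemma together with $ba(w)=0$ to rewrite $aa(w)-z(w)$ as $\inv(w)-n^2-\sum_i(h_i(\alpha)-1)$, clear the powers of $q$ via $[k]_{q^{-1}}=q^{1-k}[k]_q$, and sum over $\alpha$ against the Mahonian identity $\sum_w q^{\inv(w)}=[1]_q[2]_q\cdots[2n]_q$. All the exponent bookkeeping in your write-up checks out.
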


\begin{proof} Let \[ \inv(w) = aa(w) + ab(w) + bb(w) \] be the total number of inversions of $w$.  Setting $t=q^{-1}$ in (\ref{eq:qt}) yields 

 \begin{equation*} \sum_{w \in \SS_{2n} \; : \; p_A(w)=\alpha} q^{\inv(w)-n^2-\sum_{i=1}^n (h_i(\a)-1) }=  [2]_{q^{-1}} [4]_{q^{-1}} \cdots [2n]_{q^{-1}} \prod_{i=1}^n [h_i(\alpha)]_q.  \end{equation*} 
 
 Now using the fact that $[k]_{q^{-1}} = q^{1-k} [k]_q$, we obtain
 
 \begin{equation} \label{eq:qq} \sum_{w \in \SS_{2n} \; : \; p_A(w)=\alpha} q^{\inv(w)}=  [2]_{q} [4]_q \cdots [2n]_{q} \prod_{i=1}^n q^{h_i(\alpha)-1}[h_i(\alpha)]_q.  \end{equation}
 
Summing (\ref{eq:qq}) over $\alpha \in \mathcal{D}_{2n}$ we count every permutation $w \in \SS_{2n}$ once, hence 

 \begin{equation*}  \sum_{w \in \SS_{2n}} q^{\inv(w)}=  [2]_{q} [4]_q  \cdots [2n]_{q} \sum_{\alpha \in \mathcal{D}_{2n}} \prod_{i=1}^n q^{h_i(\alpha)-1}[h_i(\alpha)]_q,  \end{equation*} 
which, together with the well-known identity  
 
 \begin{equation*}  \sum_{w \in \SS_{2n}} q^{\inv(w)}=   [1]_{q} [2]_{q}  \cdot \ldots \cdot [2n]_{q},   \end{equation*} 
yields \eqref{eq:a}.
\end{proof}

\section{Random preferences}

Suppose Alice's preferences are random: $w$ is chosen uniformly among all permutations in $\SS_{2n}$.  Under the crossout procedure, how likely is Alice to eat $w^{-1}(k)$, the morsel she ranks as the $k$th worst?  We will show that the answer is $\frac{k-1}{2n-1}$; in particular, she always eats her favorite morsel and never gets stuck with her least favorite.

A Dyck path with each down step labeled by an integer between $1$ and its height is sometimes called a \emph{Hermite history} \cite{Vie}.  
The proof of the next theorem uses a well-known bijection between Hermite histories and matchings of $[2n]$, which proceeds from left to right matching each down step of the Dyck path with a previous up step. If the down step is labeled $L$, then we match it with the $L$th up step that is not yet matched.

\begin{theorem}
Fix $1 \leq m \leq n$ and $1 \leq k_1 < \cdots < k_m \leq 2n$.
If $w \in \SS_{2n}$ is chosen uniformly at random, then the probability that Alice eats all of the morsels $w^{-1}(k_1),\ldots,w^{-1}(k_m)$ is
	\[ \prod_{i=1}^m \frac{k_i - 2i + 1}{2n-2i+1}. \]
\end{theorem}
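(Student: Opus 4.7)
The plan is to reduce the problem to counting perfect matchings of $[2n]$ via the Hermite history bijection. The first observation is that Alice eats morsel $w^{-1}(k)$ if and only if $k \in D(p_A(w))$: by construction the down steps of $p_A$ are the labels $\{w(a) : a \in \calA(w)\}$, so $w^{-1}(k) \in \calA(w)$ exactly when $k$ is a down step of $p_A$. Hence the event of interest becomes $\{k_1,\ldots,k_m\} \subseteq D(p_A(w))$.

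Next I would use Theorem~\ref{t.bij} to show that for uniformly random $w$, the pair $(p_A, \ell)$ is uniformly distributed over all Hermite histories of length $2n$. This is essentially the independence observation made right after Theorem~\ref{t.bij}: the constraints on $(p_B, m)$ do not depend on $(p_A, \ell)$, so marginalizing the bijection over $(p_B, m)$ leaves the uniform distribution on Hermite histories. Applying the Hermite history--matching bijection described just before the theorem statement, $(p_A, \ell)$ corresponds to a uniform random perfect matching $M$ of $[2n]$, in which the down steps of $p_A$ are precisely the right endpoints (larger elements) of the matched pairs of $M$. Thus the desired probability equals the probability that $k_1, \ldots, k_m$ are all right endpoints in a uniformly random matching of $[2n]$.

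Finally I would count matchings of $[2n]$ in which each $k_i$ is a right endpoint. Processing $k_1 < \cdots < k_m$ in order, the partner of $k_i$ must lie in $[1, k_i-1]$, cannot equal any $k_j$ with $j<i$ (else $k_j$ would be a left endpoint, violating the hypothesis on $k_j$), and must avoid the $i-1$ elements already used as partners of $k_1, \ldots, k_{i-1}$. This leaves exactly $(k_i - 1) - (i-1) - (i-1) = k_i - 2i + 1$ choices. After these $m$ pairs are chosen, the remaining $2n-2m$ elements form an arbitrary matching among themselves, contributing $(2n-2m-1)!!$ ways. Dividing by the total number $(2n-1)!!$ of matchings and using $\frac{(2n-1)!!}{(2n-2m-1)!!} = \prod_{i=1}^m (2n-2i+1)$ yields the claimed probability.

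The only real subtlety is identifying the marginal distribution of $(p_A, \ell)$ as uniform over Hermite histories; once that is done, the remaining argument is elementary counting. As a consistency check, the numerator $\prod_{i=1}^m (k_i - 2i+1)$ vanishes exactly when some $k_i < 2i-1$, which is precisely when no matching with the required right endpoints can exist.
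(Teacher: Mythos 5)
Your proposal is correct and follows essentially the same route as the paper: reduce to the statement that $(p_A,\ell)$ is a uniform random Hermite history, transfer to uniform random matchings of $[2n]$ via the left-to-right matching bijection, and count the matchings in which each $k_i$ is matched to a smaller element, obtaining $\prod_{i=1}^m (k_i-2i+1)\cdot(2n-2m-1)(2n-2m-3)\cdots 1$ over $(2n-1)(2n-3)\cdots 1$. Your write-up actually spells out two points the paper leaves implicit (why the marginal of $(p_A,\ell)$ is uniform, and the inclusion--exclusion of the $k_j$'s and their partners in the factor $k_i-2i+1$), but the argument is the same.
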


\begin{proof}
According to Theorem~\ref{t.bij}, if $w \in \SS_{2n}$ is a uniform random permutation, then $(p_A, \ell)$ is a uniform random Hermite history of length $2n$.
We must therefore count the number of Hermite histories of length $2n$ in which $k_1,\ldots,k_\ell$ are all down steps.  As described above, these are in bijection with matchings of $[2n]$ in which each $k_i$ is matched to a number less than $k_i$.  The number of such matchings is 
\[ (k_1-1)(k_2-3)\cdots(k_\ell-2\ell+1) \cdot (2n-2\ell-1)(2n-2\ell-3) \cdots 1. \]  Dividing by the total number of matchings $(2n-1)(2n-3)\cdots 1$ yields the result.
\end{proof}

\section{Games of odd length}

Following \cite{LS}, we adopt the convention that Bob always has the last move.  Thus if the number of morsels is odd, say $2n-1$, then Bob will move both first and last.  As a result, the combinatorial description of the crossout procedure (alternately mark B below the smallest unmarked number and then mark A below the leftmost unmarked number) is unchanged.  Let $\calA(w)$ be the set of $n-1$ morsels eaten by Alice, and $\calB(w)$ the complementary set of $n$ morsels eaten by Bob.  We modify the construction of the Dyck paths as follows: $p_A$ is the Dyck path of length $2n$ with down steps $\{w(a)\}_{a \in \calA(w)} \cup \{2n\}$, and $p_B$ is the Dyck path of length $2n$ with down steps $\{b+1\}_{b \in \calB(w)}$.  In other words, we append a final down step to Alice's path and an initial up step to Bob's path.  We label the down steps of each path as before; this time, since only $n-1$ positions are marked A, the last down step of $p_A$ has no label.

\begin{theorem}
\label{t.odd}
The crossout correspondence gives a bijection between permutations $w \in  \mathfrak{S}_{2n-1}$ and 4-tuples $(p_A,p_B,(\ell_i)_{i=1}^{n-1}, (m_i)_{i=1}^n)$, such that
\smallskip
\begin{itemize}
\item $p_A$ and $p_B$ are Dyck paths of length $2n$.
\smallskip
\item $1 \leq \ell_i \leq h_i^*(p_A)$ for all $i=1,\ldots,n-1$.
\smallskip
\item $1 \leq m_i \leq h_i(p_B)$ for all $i=1,\ldots,n$.
\end{itemize}
\end{theorem}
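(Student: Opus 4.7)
My plan is to mirror the proof of Theorem~\ref{t.bij} by explicitly constructing the inverse of the crossout correspondence, tailored to the odd length $2n-1$. Given a valid 4-tuple $(p_A, p_B, \ell, m)$ I would recover $w \in \SS_{2n-1}$ in three stages: first use $p_B$ to fix the A/B markings on the $2n-1$ positions, then fill the A-marked boxes using $p_A$ and $\ell$, and finally fill the B-marked boxes using $p_B$ and $m$.

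For the markings I would declare positions $\{i-1 : i \in D(p_B)\}$ to be B-marked and the remaining $n-1$ positions to be A-marked. To fill the A-boxes I would write $D(p_A) = \{d_1 < \cdots < d_n\}$, observe that $d_n = 2n$ is forced (the appended last step of a Dyck path), and for $i = 1, \ldots, n-1$ place $d_i$ in the $\ell_i$-th empty A-box from the left. For the B-boxes I would follow the even-case recipe verbatim: set $\{b_1 < \cdots < b_n\} = [2n-1] \setminus \{d_1, \ldots, d_{n-1}\}$ and, for each $i = 1, \ldots, n$, write $b_{m_i}$ in the leftmost empty B-box, then delete $b_{m_i}$ and re-index the remaining $b$'s.

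The one step needing real care — and the reason $h_i^*$ appears for $p_A$ while $h_i$ appears for $p_B$ — is justifying that the $\ell_i$-th empty A-box exists at stage $i$. At that stage exactly $n-i$ A-boxes remain empty, so I would verify $h_i^*(p_A) \leq n-i$ for $i \leq n-1$ by splitting on whether $p_A$ has an up step to the right of its $i$-th down step. When it does not, the remaining $n-i$ steps must all be down, pinning $h_i(p_A) = n-i+1$ and hence $h_i^* = h_i - 1 = n-i$; when it does, the same counting rules out $h_i = n-i+1$, so $h_i^* = h_i \leq n-i$. The B-box filling is well-defined for the familiar reason $m_i \leq h_i(p_B) \leq n+1-i$.

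I do not expect a genuine obstacle: once the inverse is well-defined, checking that it inverts the forward map is a bookkeeping exercise identical in structure to the one carried out for Theorem~\ref{t.bij}. The main conceptual point is the asymmetry just noted — the unlabeled appended down step lives on $p_A$ rather than on $p_B$ — which simply reflects the fact that Alice now eats one fewer morsel than Bob.
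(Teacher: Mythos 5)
Your proposal is correct and takes essentially the same route as the paper: the paper omits the proof of this theorem as analogous to Theorem~\ref{t.bij}, whose proof constructs the inverse exactly as you do (mark B-boxes via $D(p_B)$, fill A-boxes using $(p_A,\ell)$, then fill B-boxes using $(p_B,m)$ with deletion and re-indexing). Your case analysis showing $h_i^*(p_A)\leq n-i$ correctly supplies the one genuinely new counting detail that the odd case requires.
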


\old{
\section{more to do}

\begin{itemize}
\item isn't there a shorter proof of Theorem 3 using Theorem 1? just observe that the total number of Bob outcomes (which morsels he eats and what order he eats them in) is 2*4*...*2n.  On the other hand the current proof is shows the refinement, Theorem 6.
\item AB inversions?
\item ref Flajolet continued fractions \cite{Fla}?
\item ref for bijection between Hermite histories and matchings? Viennot?
\end{itemize}
}

\section*{Acknowledgements}
We thank Alex Postnikov and Richard Stanley for helpful discussions.

\end{document}